\documentclass{amsart}
\usepackage{amssymb}
\usepackage{fontenc}
\usepackage{url}
\usepackage{hyperref}
\newtheorem{conjecture}{Conjecture}

\newtheorem{theorem}{Theorem}

\title{Blow-up trick in combinatorics}
\author{Veronica Phan*}
\thanks{*Ho Chi Minh City; email: \url{kyubivulpes@gmail.com}}
\begin{document}
\begin{abstract}
Blow-up in graph theory is a procedure in which each vertex is replaced by copies of itself, and two copies are adjacent if and only if the original vertices are adjacent. In this paper, we extend the concept of graph blow-up to a more general combinatorial context and discuss its potential applications.
\end{abstract}
\maketitle
\section{Blow-up of a graph}

Given a graph $G=(V,E)$, a blow-up of $G$ is obtained by replacing each vertex $v\in V$ with an independent set of vertices $I_v$ ($I_v$ could be empty), and for every edge $uv\in E$, adding all possible edges between $I_u,I_v$. More formally, the resulting graph is $$G'=\left(\bigcup_{v\in V}I_v,\bigcup_{uv\in E}I_u\times I_v\right)$$.

What is special about a blow-up of a graph? From the construction, we see that the blow-up of a graph can preserve certain local properties of the original graph, such as the clique number $\omega(G)$. This observation is reminiscent of Turán's theorem, but can this simple procedure help us solve it?

\begin{theorem}[Turán's theorem, simple form]\label{turan}
Every graph with $n$ vertices that does not contain $K_{r+1}$ as a subgraph (or $K_{r+1}$-free) has no more than $\left(1-\frac{1}{r}\right)\frac{n^2}{2}$ edges.
\end{theorem}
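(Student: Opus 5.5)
We plan to prove Theorem~\ref{turan} through its weighted (Motzkin--Straus) form, which is exactly the statement that passes well to blow-ups. Given a $K_{r+1}$-free graph $G=(V,E)$ with $n$ vertices, assign to each vertex $v$ a weight $x_v\ge 0$ with $\sum_v x_v=1$, and consider
$$f(x)=\sum_{uv\in E}x_ux_v .$$
If the weights are rational, say $x_v=m_v/N$, then $N^2 f(x)$ is the number of edges of the blow-up obtained by replacing each $v$ with $m_v$ copies. This blow-up is still $K_{r+1}$-free, since a clique uses at most one vertex from each independent set $I_v$ and so projects to a clique of $G$. Taking the uniform weights $x_v=1/n$ gives $f=|E|/n^2$. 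It therefore suffices to show
$$\max_{x\in\Delta} f(x)\le \frac12\left(1-\frac1r\right),$$
where $\Delta$ is the simplex. Multiplying by $n^2$ then yields the bound $\left(1-\frac1r\right)\frac{n^2}{2}$.

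The key step is a weight-shifting argument. The simplex is compact, so we choose a maximizer $x$ of $f$, and among all maximizers one with the smallest support $S=\{v:x_v>0\}$. We claim $S$ induces a clique. Suppose instead that $u,w\in S$ are non-adjacent. Then $f$ has no $x_ux_w$ term, so along the line that moves weight from $w$ to $u$ (keeping the total fixed) $f$ is affine. Writing $f=x_u A_u+x_w A_w+(\text{terms without }x_u,x_w)$, where $A_u=\sum_{y\sim u}x_y$, the change is proportional to $A_u-A_w$. Maximality forces $A_u=A_w$, so $f$ is constant along this line. We can then shift all the weight of $w$ onto $u$ without decreasing $f$, which shrinks the support and contradicts minimality. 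In blow-up language, this is the classical Zykov symmetrization: replace all copies of $w$ by clones of $u$.

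Once $S$ is a clique, $|S|=k\le r$ because $G$ is $K_{r+1}$-free. On a clique,
$$f(x)=\frac12\Big(\big(\textstyle\sum_{v\in S}x_v\big)^2-\sum_{v\in S}x_v^2\Big)=\frac12\Big(1-\sum_{v\in S} x_v^2\Big),$$
and by Cauchy--Schwarz $\sum x_v^2\ge 1/k\ge 1/r$. Hence $f(x)\le\frac12(1-\frac1r)$, which finishes the proof.

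I expect the main obstacle to be the symmetrization step: arguing cleanly that at a maximizer $A_u=A_w$ for non-adjacent $u,w$, and that the weight transfer keeps $x$ a maximizer. The rest is routine. An alternative route that stays purely combinatorial is to work directly with blow-ups, repeatedly replacing the class of $w$ by clones of the vertex with larger degree. Each step does not decrease the edge count, and the process ends at a complete multipartite graph with at most $r$ parts, where the bound follows from convexity of $m\mapsto m^2$.
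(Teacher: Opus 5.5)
Your proof is correct and follows the paper's route: Turán's theorem is deduced from the Motzkin--Straus bound with uniform weights $x_v=1/n$, and Motzkin--Straus is proved by shifting weight between non-adjacent vertices, along which the Lagrangian is affine (the paper encodes this as the identity $(x_a+x_b)\mathcal{L}_G=x_a\mathcal{L}_{G_a}+x_b\mathcal{L}_{G_b}$), until the support is a clique on at most $r$ vertices. Your minimal-support maximizer is a tidier packaging of the paper's iterative vertex removal, and your Cauchy--Schwarz step supplies the final step that the paper only calls ``straightforward.''
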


Assume $G=(V,E)$ is a $K_{r+1}$-free graph with $|V|=n$. We try to blow-up $G$ by replacing each vertex $v\in V$ with an independent set $I_v$ of size $|I_v|=c_v$. The resulting graph has $\sum_{v\in V}c_v$ vertices and $\sum_{uv\in E}c_uc_v$ edges. By Turán's theorem, we expect that $$\sum_{uv\in E}c_uc_v\leq\left(1-\frac{1}{r}\right)\frac{\left(\sum_{v\in V}c_v\right)^2}{2}$$.

Let $x_v=\frac{c_v}{\sum_{v\in V}c_v}$, then $\sum_{v\in V}x_v=1,0\leq x_v\leq1,\forall v\in V$. Divide both sides of the inequality by $\left(\sum_{v\in V}c_v\right)^2$, we obtain $$\sum_{uv\in E}x_ux_v\leq\frac{1}{2}\left(1-\frac{1}{r}\right)$$

This is precisely the key inequality T. S. Motzkin and E. G. Straus \cite{Motzkin-Straus} used to prove Turán's theorem.

\begin{theorem}[Motzkin-Straus theorem]\label{M-S}
Let $G=(V,E)$ be a $K_{r+1}$-free graph. We assign each vertex $v\in V$ a nonnegative real number $x_v$ such that $\sum_{v\in V}x_v=1$. Then we have $$\sum_{uv\in E}x_ux_v\leq\frac{1}{2}\left(1-\frac{1}{r}\right)$$
\end{theorem}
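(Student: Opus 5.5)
The approach is the classical Motzkin--Straus weight-shifting argument. We do not invoke Theorem~\ref{turan}, since the point is to derive it. Consider the function $f(x)=\sum_{uv\in E}x_ux_v$ on the compact simplex $\Delta=\{x\in\mathbb{R}_{\ge0}^V:\sum_v x_v=1\}$. Since $f$ is continuous, it attains a maximum there. Among all maximizers we choose one whose support $S=\{v: x_v>0\}$ has the smallest possible size. It then suffices to bound $f$ at this particular point.

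\textbf{Step 1: the support of a minimal maximizer is a clique.} Suppose $u,w\in S$ are distinct and non-adjacent. Move weight along the segment $x_u\mapsto x_u+t$, $x_w\mapsto x_w-t$. Because $uw\notin E$, the product $x_ux_w$ does not occur in $f$. Hence $f$ restricted to this segment is affine in $t$:
\[
f(t)=f(0)+t\Big(\sum_{y\sim u}x_y-\sum_{y\sim w}x_y\Big).
\]
Here the sums are unaffected by the shift, again because $u\not\sim w$. Since $x$ is a maximizer and $t=0$ lies in the interior of the admissible interval $[-x_u,x_w]$, the slope must be $0$. So $f$ is constant on the whole segment. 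Taking $t=x_w$ gives another maximizer whose support is $S\setminus\{w\}$, which contradicts the minimality of $|S|$. Therefore every two vertices of $S$ are adjacent, i.e.\ $S$ spans a clique. Since $G$ is $K_{r+1}$-free, we get $k:=|S|\le r$.

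\textbf{Step 2: bound $f$ on a clique.} On the clique $S$ we have
\[
f(x)=\sum_{\{u,v\}\subseteq S}x_ux_v=\frac{1}{2}\Big(\big(\sum_{v\in S}x_v\big)^2-\sum_{v\in S}x_v^2\Big)=\frac12\Big(1-\sum_{v\in S}x_v^2\Big).
\]
By Cauchy--Schwarz, $\sum_{v\in S}x_v^2\ge \frac{1}{k}$. Hence
\[
f(x)\le\frac12\left(1-\frac1k\right)\le\frac12\left(1-\frac1r\right),
\]
using $k\le r$. Because $x$ is a global maximizer, this bound holds for every admissible weighting.

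The only delicate point is Step 1. The shift must stay inside the simplex, so the argument has to use that $f$ is linear, not merely concave, along the segment. It also has to rule out the case where the maximizer is only a boundary maximum in the $t$-direction. Interiority handles this: both $x_u,x_w>0$, so $t=0$ is an interior point of the interval and the slope is forced to vanish. The case $S=\{v\}$ is trivial, since then $f=0$.
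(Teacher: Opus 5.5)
Your proof is correct and is essentially the paper's argument: the paper's identity $(x_a+x_b)\mathcal{L}_G=x_a\mathcal{L}_{G_a}+x_b\mathcal{L}_{G_b}$ is exactly your observation that $f$ is affine when weight is shifted between two non-adjacent vertices, so merging their weights never decreases $f$ and the problem reduces to a clique on at most $r$ vertices. The differences are in packaging: the paper repeats the merge starting from an arbitrary weighting, so it needs neither compactness nor your zero-slope argument at an interior point, whereas you take a minimal-support maximizer and derive a contradiction, and you also write out the final clique bound via Cauchy--Schwarz that the paper calls ``straightforward.''
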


\begin{proof}[Proof (sketch)]
We consider maximizing $\mathcal{L}_G(x_v)_{v\in V}=\sum_{uv\in E}x_ux_v$. Assume $a,b$ are not adjacent in $G$, define $G_a,G_b$ be the graphs created from $G$ by removing the vertex $b,a$ from $V$, respectively. We assign nonnegative number $(x^{a}_v)_{V\setminus\{b\}}$ to vertices of $G_a$ as follow: $x^a_a=x_a+x_b, x^a_v=x_v, v\neq a$, similarly for $G_b$. Then $$(x_a+x_b)\mathcal{L}_G(x_v)_{v\in V}=x_a\mathcal{L}_{G_a}(x^a_v)_{v\in V\setminus\{b\}}+x_b\mathcal{L}_{G_b}(x^b_v)_{v\in V\setminus\{a\}}$$

Thus, we may remove either vertex $a$ or $b$ so that the value of $\mathcal{L}_G(x_v)_{v\in V}$ does not decrease. Repeating this process until every pair of remaining vertices is adjacent, we eventually obtain a complete graph on at most $r$ vertices. The result then follows straightforwardly.
\end{proof}

\begin{proof}[Proof of theorem \ref{turan}]
We assign each vertex $v\in V$ number $\frac{1}{n}$, the result follows by theorem \ref{M-S}.
\end{proof}

We have just proved the simple form of Turán's theorem by blowing up the graph, deriving the key inequality in a natural way, and completing with a simple trick. So why does the blow-up trick work?

$-$ Blow-up preserves the important local properties of the object.

$-$ Blow-up transforms a discrete problem into a semi-continuous one, allowing us to apply tools from analysis.

$-$ By blowing up, we can vary the size of the object, which helps us relate object of different size.

\section{Blow-up of hypergraph}

Now we extend the blow-up trick to the hypergraph setting, specifically to $3$-graph. Let us begin with Turán conjecture \cite{Turan} on $K^3_4$-free $3$-graph.

\begin{conjecture}[Turán conjecture, simple form]
Every $K^3_4$-free $3$-graph with $n$ vertices has no more than $\frac{5}{54}n^3$ edges
\end{conjecture}

Similarly to the graph case, given a $K^3_4$-free $3$-graph $G=(V,E)$, we replace each vertex $v\in V$ with an independent set of vertices $I_v$ of size $|I_v|=c_v$, and for every edge $(u,v,w)\in E$, we add all possible $3$-edges between $I_u,I_v$ and $I_w$. We then seek to maximize $\sum_{(u,v,w)\in E}x_ux_vx_w$ over all nonnegative real numbers $(x_v)_{v\in V}$ statisfying $\sum_{v\in V}x_v=1$.

But now the problem becomes more complex become the expression is nonlinear. Moreover, $(u,v,w)\notin E$ does not exclude the possibility that there exists some $t\in V$ such that $(u,v,t)\in E$, which make the calculation even more complicated.

So what is the problem here? We have not added enough information to the blow-up. We need to add $3$-edges of the form $(u_1,u_2,v_1)$ for $u_1,u_2\in I_u$ and $v_1\in I_v$, which can be represented as a directed $2$-edge $u\rightarrow v$. However, we cannot add edges of this type arbitrarily, since the resulting $3$-graph may no longer be $K^3_4$-free.

Suppose we have the original $K^3_4$-free $3$-graph $G=(V,E)$ and the directed graph $D=(V,A)$ describing how the special $3$-edges are added into the blow-up construction. We now determine the forbidden configurations that ensure the resulting graph remains $K^3_4$-free:

$-$ If $u\rightarrow v,v\rightarrow u\in A$, then the vertices $\{u_1,u_2,v_1,v_2\}$ where $u_1,u_2\in I_u$ and $v_1,v_2\in I_v$ induced a copy of $K^3_4$. Therefore, $D$ must be an oriented graph (in particular, it contains no pair of opposite directed edges).

$-$ If $u\rightarrow v,u\rightarrow w\in A, (u,v,w)\in E$, then the vertices $\{u_1,u_2,v,w\}$ where $u_1,u_2\in I_u$ induced a copy of $K^3_4$. Therefore if $(u,v,w)\in E$, then at most one of $u\to v$ and $u\to w$ may belong to $A$. Equivalently, whenever $u\rightarrow v,u\rightarrow w\in A$, we must have $(u,v,w)\notin E$.

The problem is still complicated, but it now has much more structure. To simplify it (at the cost of no longer addressing the original question directly), we fixed the oriented graph $D=(V,A)$ and add $3$-edge in a suitable way that maximizes the number of edges while still keep the resulting $3$-graph $K^3_4$-free. A natural construction is the following: $(u,v,w)\in E$ if and only if the vertices $\{u,v,w\}$ induced a subgraph of $D$ with at least $2$ edges and containing no copy of the directed graph $S=(\{a,b,c\},a\rightarrow b,a\rightarrow c)$.

The only missing condition for the resulting $3$-graph $G=(V,E)$ to be $K^3_4$-free is that $D$ does not contain directed cycle of length $4$. This is essentially the beautiful construction of $K^3_4$-free $3$-graph due to Fon-der-Flaass \cite{FDF}, we recover it simply by blowing up the $3$-graph and observing what happens.

Now we simplify the problem further by ignoring the condition that $D$ does not contain a directed cycle of length $4$, at the cost that the resulting graph may no longer $K^3_4$-free. Given a oriented graph $D=(V,A)$ and $3$-graph $G=(V,E)$ constructed as above, we seek to maximize $$\frac{\sum_{(u,v,w)\in E}c_uc_vc_w+\sum_{u\rightarrow v\in A}\frac{c_u(c_u-1)}{2}c_v}{\left(\sum_{v\in V}c_v\right)^3}$$ over all nonnegative integer $(c_v)_{v\in V}$

Let $x_v=\frac{c_v}{\sum_{v\in V}c_v}$ and suppose that the $c_v$ are arbitrary large, then the above expression becomes asymptotically $$\sum_{(u,v,w)\in E}x_ux_vx_w+\frac{1}{2}\sum_{u\rightarrow v\in A}x_u^2x_v$$

Now we are able to obtain the bound $\frac{3}{32}$ for the above expression. This is not the desired bound of $\frac{5}{54}$ since we have removed an important condition. Nevertheless, it is still equivalent to a strong result of Alexander A. Razborov \cite{AAR} stating that the edge density of a Fon-der-Flaass graph is at least $\frac{7}{16}(1-o(1))$.

We generalize our previous construction as follow: Instead of an oriented graph $D=(V,A)$, we allow both $u\rightarrow v,v\rightarrow u$ to be edges of $D$ but each assigned a nonnegative weight $p_{u\rightarrow v}$ and $p_{v\rightarrow u}$ respectively. Equivalently, we may view this as an undirected edge $(u,v)$ with two directional weights. Under the about condition, we should have $p_{u\rightarrow v}+p_{v\rightarrow u}\leq1$. For simplify, we further assume that $p_{u\rightarrow v}+p_{v\rightarrow u}=1$. We can see that the previous construction is just a special case where $p_{u\rightarrow v}=1$ and $p_{v\rightarrow u}=0$ for each $u\rightarrow v\in A$.

Relax the condition: $(u,v,w)\in E$ if and only if the vertices $\{u,v,w\}$ induced a subgraph of $D$ with at least $2$ edges.View $p_{u\rightarrow v}$ as the probability that $u\rightarrow v\in A$. Then the probability of $(u,v,w)\in E$ is $0$ if the subgraph of $D$ induced by $\{u,v,w\}$ contains fewer than $2$ edges, and otherwise it is $1-p_{u\rightarrow v}p_{u\rightarrow w}-p_{v\rightarrow w}p_{v\rightarrow u}-p_{w\rightarrow u}p_{w\rightarrow v}$ (if $(u,v)\notin A$, we treat $p_{u\rightarrow v}=p_{v\rightarrow u}=0$). So finally, we seek to maximize $$\sum_{(u,v,w)\in E}(1-p_{u\rightarrow v}p_{u\rightarrow w}-p_{v\rightarrow w}p_{v\rightarrow u}-p_{w\rightarrow u}p_{w\rightarrow v})x_ux_vx_w+\frac{1}{2}\sum_{(u,v)\in A}(p_{u\rightarrow v}^2x_u^2x_v+p_{v\rightarrow u}^2x_v^2x_u)$$ over all nonnegative $(x_v)_{v\in V},p_{u\rightarrow v}$ that satisfy $\sum_{v\in V}x_v=1$ and $p_{u\rightarrow v}+p_{v\rightarrow u}=1$ if $(u,v)\in A$ and $0$ if otherwise.

After some manipulations, the above expression is no more than $$\sum_{(u,v,w)\in E}x_ux_vx_w+\frac{1}{2}\sum_{(u,v)\in A}(x_u^2x_v+x_v^2x_u)-\frac{1}{2}\sum_{u\in V}x_u\left(\sum_{(u,v)\in A}p_{u\rightarrow v}x_v\right)^2$$

Note that $\sum_{v\in V}x_v=1$ and $\sum_{v\in V}\left(\sum_{(u,v)\in A}p_{u\rightarrow v}x_v\right)=\sum_{(u,v)\in A}x_ux_v$, apply Cauchy-Schward to the third term we obtain $$\sum_{(u,v,w)\in E}x_ux_vx_w+\frac{1}{2}\sum_{(u,v)\in A}(x_u^2x_v+x_v^2x_u)-\frac{1}{2}\left(\sum_{(u,v)\in A}x_ux_v\right)^2$$

This magical Lagrangian is the key to bounding the number of edges of the Fon-der-Flaass graph. Using the same trick as in the Motzkin-Straus, we obtain our desired result in an elementary way, for more detail, see \cite{verox}.

So by blowing up the $K^3_4$-free $3$-graph and carrying out a careful analysis, we recover the same result as Alexander A. Razborov \cite{AAR} without heavily relying on the flag algebra method. Furthermore, we can see several remaining problems that need to be resolved in order to reach the optimal $\frac{5}{54}n^3$ bound, such as making use of the condition that $D$ does not contain a directed cycle of length $4$, or adding more structure to each edge of $D$, and so on.

\section{Blow-up trick in combinatoric}
The blow-up trick is quite useful in graphs and hypergraphs, but what about in a more general combinatorial setting? Let’s start with a problem that is almost completely unrelated to graphs: the Frankl's union-closed sets conjecture \cite{Frankl}.

\begin{conjecture}[Frankl's union-closed sets conjecture]
If $\mathcal{F}$ is a non-empty union-closed family of subset of $[n]$ (i.e., for all $A,B\in\mathcal{F}$ we have $A\cup B\in\mathcal{F}$), then there exists $k\in[n]$ such that $|\{S\in\mathcal{F}:k\in S\}|\geq|\{S\in\mathcal{F}:k\notin S\}|$
\end{conjecture}

Now we blow-up the union-closed family, but what do we replace an element $k$ with? If we simply use a single set $I_k$ then the new problem is equivalent to the original one, which is not interesting for our purposes. Instead, we should replace $k$ with a family of all non-empty subsets of $I_k$. The reason behind this choice is that it is highly symmetric, and equality in Frankl's union-closed sets conjecture is attained by the family of all subsets of a finite set.

More formally, let $\mathcal{F}\subset2^{[n]}$ be an union-closed family, for each element $k\in[n]$ we take a non-empty set $I_k$ of size $|I_k|=c_k$. Then the resulting union-closed family is $$\mathcal{F}_{(I_k)_{k\in[n]}}=\left\{\bigcup_{k\in S}J_k:S\in\mathcal{F},J_k\subset I_k,J_k\neq\varnothing\right\}$$

We expect the conjecture is true for this new union-closed family, which mean that there exists an element $v_k\in I_k$ such that 

\begin{equation*}
\begin{split}
&|\{S\in\mathcal{F}_{(I_m)_{m\in[n]}}:v_k\in S\}|\geq|\{S\in\mathcal{F}_{(I_m)_{m\in[n]}}:v_k\notin S\}| \\
\Rightarrow &\left|\left\{\bigcup_{m\in S}J_m:S\in\mathcal{F},k\in S,J_m\subset I_m,J_m\neq\varnothing,v_k\in J_k\right\}\right| \\
\geq &\left|\left\{\bigcup_{m\in S}J_m:S\in\mathcal{F},J_m\subset I_m,J_m\neq\varnothing,k\notin S\vee (k\in S\wedge v_k\notin S_k)\right\}\right| \\
\Rightarrow &\sum_{S\in\mathcal{F},k\in S}2^{c_k-1}\prod_{m\in S,m\neq k}(2^{c_m}-1)\geq\sum_{S\in\mathcal{F},k\notin S}\prod_{m\in S}(2^{c_m}-1)+\sum_{S\in\mathcal{F},k\in S}(2^{c_k-1}-1)\prod_{m\in S,m\neq k}(2^{c_m}-1) \\
\Rightarrow &\sum_{S\in\mathcal{F},k\in S}\prod_{m\in S,m\neq k}(2^{c_m}-1)\geq\sum_{S\in\mathcal{F},k\notin S}\prod_{m\in S}(2^{c_m}-1)
\end{split}
\end{equation*}

Let $x_k=2^{c_k}-1$ then $x_k\geq1$ (because $I_k$ is nonempty), the above expression become $$\sum_{S\in\mathcal{F},k\in S}\prod_{m\in S,m\neq k}x_m\geq\sum_{S\in\mathcal{F},k\notin S}\prod_{m\in S}x_m$$

From that we can derive a new conjecture about union-closed set:

\begin{conjecture}
If $\mathcal{F}$ is a non-empty union-closed family of subset of $[n]$ and $x_1,x_2,...,x_n\geq1$ are real numbers, then there exists $k\in[n]$ such that $$\sum_{S\in\mathcal{F},k\in S}\prod_{m\in S,m\neq k}x_m\geq\sum_{S\in\mathcal{F},k\notin S}\prod_{m\in S}x_m$$
\end{conjecture}

We haven't solved Frankl's union-closed sets conjecture yet, but we have discovered a beautiful generalization that may help in solving the original problem.

So, the general way to blow-up an combinatorial object is to replace each element of the object with a structure (usually a set) such that the properties of the object we care are preserved. We then analyze the new object, which may lead to a new generalization and a better understanding of the original object. Unlike other combinatorial techniques such as the probabilistic method, the blow-up trick does not directly help us solve a problem. Instead, it helps us find the right generalization and may also deepen our understanding of the nature of the problem.

\end{document}